\definecolor{webgreen}{rgb}{0,.5,0}
\definecolor{webbrown}{rgb}{.6,0,0}
\newcommand{\seqnum}[1]{\href{https://oeis.org/#1}{\rm \underline{#1}}}
\newcommand{\NN}{\mathbb{N}}
\newcommand{\RR}{\mathbb{R}}
\DeclareMathOperator{\black}{black}
\DeclareMathOperator{\white}{white}
\DeclareMathOperator{\oD}{oddDisp}
\newcommand{\rFq}[5]{{}_{#1}F_{#2}\!\left(\begin{smallmatrix}#3\\#4\end{smallmatrix};#5\right)}
\begin{document}

\theoremstyle{plain}
\newtheorem{theorem}{Theorem}
\newtheorem{corollary}[theorem]{Corollary}
\newtheorem{lemma}[theorem]{Lemma}
\newtheorem{proposition}[theorem]{Proposition}

\theoremstyle{definition}
\newtheorem{definition}[theorem]{Definition}

\theoremstyle{remark}
\newtheorem{remark}[theorem]{Remark}

\title{Black-White Cell Capacity in $k$-ary Words and Permutations}

\author[SELA FRIED]{Sela Fried$^*$}
\thanks{$^*$Department of Computer Science, Israel Academic College,
52275 Ramat Gan, Israel.
\\
\href{mailto:friedsela@gmail.com}{\tt friedsela@gmail.com}}

\begin{abstract}
We introduce a new bargraph statistic that we call black-white cell capacity. It is obtained by coloring the cells of the bargraph in a chessboard style and recording the numbers of black and white cells contained in the bargraph. We study two word families under this statistic: $k$-ary words and permutations. We obtain the corresponding generating function, in the $k$-ary words case, and a closed-form formula for each $n$, in the permutations case. Of special interest are words containing an equal number of black and white cells, that we call bw-balanced. We obtain generating functions, closed-form formulas, and asymptotics in both cases.  
\end{abstract}

\maketitle

\section{Introduction}
For a natural number $m$ we denote by $[m]$ the set $\{1,2,\ldots,m\}$. A word of length $n$ is a sequence $u= u_1\cdots u_n$ of natural numbers. Each of the $u_i$s is naturally referred to as a letter of $u$. If $k\in\NN$ is such that $\max\{u_1,\ldots, u_n\}\leq k$, the word $u$ is called $k$-ary. The set of all $k$-ary words of length $n$ is denoted by $[k]^n$. A word of length $n$ consisting of the numbers $1,2,\ldots,n$, each appearing exactly once, is called a permutation of $[n]$. The set of all permutations of $[n]$ is denoted by $S_n$. Every word has a bargraph representation obtained by assigning each letter $u_i$ a column of cells of height $u_i$ (see Figure \ref{f1}). 

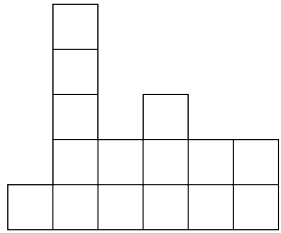
\begin{figure}[H]
\centering
\begin{tikzpicture}[scale=0.6]
  \def\size{7}
  \def\barpath{(0,0) -- (0,1) -- (1,1) -- (1,5) -- (2,5) -- (2,2) -- (3,2) -- (3,3) -- (4,3) -- (4,2) -- (5,2) -- (6,2) -- (6,0) -- cycle}
  \begin{scope}
    \clip \barpath;    
    \draw[thin] (0,0) grid (\size+1,\size+1);
  \end{scope}
  \draw[thin, black] \barpath;
\end{tikzpicture}
\caption{The bargraph of the word $152322$.}
\label{f1}
\end{figure}

The concept of representing a word as a bargraph allows analyzing words from a geometrical perspective. Many such word statistics, that make more sense in such a representation, have been systematically studied, for example, water cells, shedding light cells, and interior vertices, to name a few of the more esoteric ones. See Mansour and Shabani \cite{Man} for a survey on the subject.

In this work we introduce a seemingly new bargraph statistic, that we call black-white cell capacity. In plain words, we color the cells of the bargraph in a chessboard style, such that the southwestern cell is black, and count the number of black and white cells that it contains (see Figure \ref{f2}). 

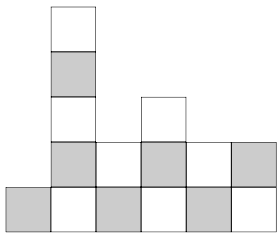
\begin{figure}[H]
\centering
\begin{tikzpicture}[scale=0.6]

  \def\barpath{(0,0) -- (0,1) -- (1,1) --
               (1,5) -- (2,5) -- (2,2) -- (3,2) --
               (3,3) -- (4,3) -- (4,2) --
               (5,2) -- (6,2) -- (6,0) -- cycle}

  \begin{scope}
    \clip \barpath;

    \foreach \x in {0,...,6} {
      \foreach \y in {0,...,5} {
        \pgfmathtruncatemacro{\parity}{mod(\x+\y,2)}
        \ifnum\parity=0
          \fill[gray!40,draw=none] (\x,\y) rectangle ++(1,1);
        \else
          \fill[white,draw=none]   (\x,\y) rectangle ++(1,1);
        \fi
      }
    }

    \draw[thin] (0,0) grid (6,5);
  \end{scope}

\end{tikzpicture}

\caption{The bargraph of the word $152322$ contains $7$ black cells and $8$ white cells.}
\label{f2}

\end{figure}

The following definition makes this more precise.

\begin{definition}
Let $i,h\in\NN$. We set 
\begin{align*}
\black_i(h) &= 
\begin{cases}
\lceil h/2\rceil,    & \textnormal{if $i$ is odd};\\
\lfloor h/2\rfloor,  & \textnormal{if $i$ is even},
\end{cases}\\
\white_i(h) &= 
\begin{cases}
\lfloor h/2\rfloor,    & \textnormal{if $i$ is odd};\\
\lceil h/2\rceil,  & \textnormal{if $i$ is even}.
\end{cases}   
\end{align*}
Let $w=w_1\cdots w_n$ be a word of length $n$ and let $i\in[n]$. We set
\begin{align*}
\black(w) &= \sum_{i=1}^n \black_i(w_i),\\
\white(w) &= \sum_{i=1}^n \white_i(w_i).
\end{align*}
\end{definition}

We enumerate $k$-ary words and permutations according to their black-white cell capacity. To this end, we introduce two variables, $b$ and $w$, such that to each word $u$ we assign a monomial $b^{\black(u)}w^{\white(u)}$. Summing these monomials over all words of a certain class for a fixed $n$, gives their enumerating polynomial. For example (see Figure \ref{f3}),
\[\sum_{\pi\in S_3} b^{\black(\pi)}w^{\white(\pi)}= 2b^4w^2 + 4b^3w^3.\]

\begin{figure}[H]
\centering
\newcommand{\barperm}[3]{%
\begin{tikzpicture}[scale=0.5, baseline=(current bounding box.south)]
\def\hA{#1}\def\hB{#2}\def\hC{#3}
\def\barpath{(0,0)--(0,\hA)--(1,\hA)--(1,\hB)--(2,\hB)--(2,\hC)--(3,\hC)--(3,0)--cycle}
  \begin{scope}
    \clip \barpath;
    \foreach \x in {0,1,2} {
      \foreach \y in {0,1,2,3} {
        \pgfmathtruncatemacro{\parity}{mod(\x+\y,2)}
        \ifnum\parity=0
          \fill[gray!40] (\x,\y) rectangle ++(1,1); 
        \else
          \fill[white]   (\x,\y) rectangle ++(1,1); 
        \fi
      }
    }
    \draw[thin] (0,0) grid (3,3);
  \end{scope}
  
\end{tikzpicture}%
}
\begin{tabular}{c c c c c c c c c c c}
\barperm{1}{2}{3} &  & \barperm{1}{3}{2} &  & \barperm{2}{1}{3} &
                  & \barperm{2}{3}{1} &  & \barperm{3}{1}{2} &  & \barperm{3}{2}{1} \\
$123$             &  & $132$             &  & $213$             &
                  & $231$             &  & $312$             &  & $321$             \\
$\;b^{4}w^{2}\;$  & $+$ & $b^{3}w^{3}$ & $+$ & $b^{3}w^{3}$ &
$+$ & $b^{3}w^{3}$ & $+$ & $b^{3}w^{3}$ & $+$ & $b^{4}w^{2}\;$
\end{tabular}
\caption{The six permutations of $[3]$ with their corresponding monomials in the variables $b$ and $w$.}
\label{f3}
\end{figure}

In Theorem \ref{t92} we establish the trivariate generating function for polynomials enumerating $k$-ary words and in Theorem \ref{t93} we establish, for each $n$, the polynomial enumerating permutations.

Of special interest are words having the same number of black and white cells, i.e., words $u$ such that $\black(u)=\white(u)$. We call such words bw-balanced. In Propositions \ref{t401} and \ref{c10} we obtain closed-form formulas for the number of bw-balanced words in $k$-ary words and permutations.

We use the notation $[x^m]\sum_{n\in\mathbb{Z}} a_n x^n = a_m$ for Laurent series coefficients. Vectors are column vectors. The set of real numbers is denoted by $\RR$ and the set of natural numbers $\{1,2,\ldots\}$ by $\NN$.

\section{Main results}

We use the Jacobi polynomials $P_n^{(\alpha,\beta)}(x)$. These are classical orthogonal polynomials with wide applications in mathematical analysis and physics (e.g., \cite[Chapter 4]{Sz} and \cite[Chapter 6.3]{And}). They also appear naturally in combinatorics. In particular, we need the following identity (e.g., \cite[(13.26)]{Gou}).  
\begin{equation}\label{e651}
\sum_{i=0}^{n}\binom{n+\alpha}{i}\binom{n+\beta}{n-i}x^{i}=(x-1)^{n}P_{n}^{(\alpha,\beta)}\left(\frac{x+1}{x-1}\right).    
\end{equation}

\subsection{\texorpdfstring{$k$}{}-ary words}

Let $k\in\NN$ to be used throughout this section and let $n\in\NN$. Denote by $f_n(b,w)$ the polynomial in $b$ and $w$ enumerating the $k$-ary words of length $n$ according to their black-white cell capacity, i.e.,
\[f_n(b,w)=\sum_{u\in[k]^n}b^{\black(u)}w^{\white(u)}.\] Set $f_0(b,w)=1$ and 
let $F_k(x,b,w)$ be the generating function for the $f_n(b,w)$s, i.e., \[F_k(x,b,w)= \sum_{n\geq0}f_n(b,w)x^n.\]

\begin{theorem}\label{t92}
We have \[F_k(x,b,w)=\frac{1+xg_k(b,w)}{1-x^{2}g_k(b,w)g_k(w,b)},\] where \[g_k(b,w)=\frac{b(1-(bw)^{\lceil k/2\rceil })+bw(1-(bw)^{\lfloor k/2\rfloor })}{1-bw}.\]
\end{theorem}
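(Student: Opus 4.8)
The plan is to recognize $g_k(b,w)$ as the generating polynomial recording the black--white contribution of a single letter placed in an odd-indexed column, and $g_k(w,b)$ as the analogous polynomial for an even-indexed column; once this is established, multiplicativity over columns together with a split-by-parity geometric series yields the claimed formula.

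First I would verify the per-letter identity. For a fixed odd $i$, a letter of value $h\in[k]$ contributes the monomial $b^{\black_i(h)}w^{\white_i(h)}=b^{\lceil h/2\rceil}w^{\lfloor h/2\rfloor}$, so the total contribution of the $i$th letter, summed over its $k$ possible values, equals $\sum_{h=1}^{k}b^{\lceil h/2\rceil}w^{\lfloor h/2\rfloor}$. Splitting this sum according to the parity of $h$ --- writing $h=2j-1$ for odd $h$, so that $j$ ranges over $1,\ldots,\lceil k/2\rceil$, and $h=2j$ for even $h$, so that $j$ ranges over $1,\ldots,\lfloor k/2\rfloor$ --- rewrites it as $b\sum_{j=1}^{\lceil k/2\rceil}(bw)^{j-1}+bw\sum_{j=1}^{\lfloor k/2\rfloor}(bw)^{j-1}$, and summing the two finite geometric series gives exactly $g_k(b,w)$. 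For even $i$ the roles of $b$ and $w$ are interchanged, so the same computation produces $g_k(w,b)$.

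Next, since in a $k$-ary word the letters are chosen independently and $\black$ and $\white$ are additive over columns, $f_n(b,w)$ factors as a product of the per-column polynomials. A word of length $n$ has $\lceil n/2\rceil$ odd-indexed columns and $\lfloor n/2\rfloor$ even-indexed columns, hence $f_n(b,w)=g_k(b,w)^{\lceil n/2\rceil}g_k(w,b)^{\lfloor n/2\rfloor}$, which also reproduces $f_0(b,w)=1$. Substituting this into $F_k(x,b,w)=\sum_{n\geq0}f_n(b,w)x^n$ and separating the even terms $n=2m$ from the odd terms $n=2m+1$, the even part sums to $\sum_{m\geq0}\bigl(x^2g_k(b,w)g_k(w,b)\bigr)^m=\bigl(1-x^2g_k(b,w)g_k(w,b)\bigr)^{-1}$ and the odd part equals $xg_k(b,w)$ times the same series; adding the two gives $F_k(x,b,w)=\bigl(1+xg_k(b,w)\bigr)/\bigl(1-x^2g_k(b,w)g_k(w,b)\bigr)$, as claimed.

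There is no genuine obstacle in this argument; the only step requiring care is the parity bookkeeping in the per-letter computation, namely checking that the summation limits $\lceil k/2\rceil$ and $\lfloor k/2\rfloor$ and the prefactors $b$ and $bw$ are correctly assigned for both parities of $k$. All the power-series manipulations are purely formal in $x$, so the geometric-series step needs no convergence justification. The identity \eqref{e651} quoted above is not used here; presumably it enters only the later results on permutations and on bw-balanced words.
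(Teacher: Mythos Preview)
Your proof is correct and follows essentially the same approach as the paper: identify $g_k(b,w)$ as the per-letter enumerating polynomial for an odd-indexed column (and $g_k(w,b)$ for an even-indexed one), use independence of columns to factor $f_n$, then split the sum over $n$ by parity and recognize a geometric series in $x^2$. Your explicit parity split of $h$ to verify the closed form of $g_k(b,w)$ is a bit more detailed than the paper's version, but the argument is otherwise identical.
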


\begin{proof}
Consider a letter $h\in[k]$ corresponding to an index $i\in[n]$. If $i$ is odd, then the letter contributes $b^{\left\lceil h/2\right\rceil }w^{\left\lfloor h/2\right\rfloor }$ to $x^n$ and if $i$ is even, its contribution is $b^{\left\lfloor  h/2\right\rfloor }w^{\left\lceil h/2\right\rceil }$. Let $g_k(b,w)$ stand for the contribution of all possible letters at an odd index. Thus, \[g_k(b,w)=\sum_{h=1}^{k}b^{\lceil h/2\rceil }w^{\lfloor h/2\rfloor }=\frac{b(1-(bw)^{\lceil k/2\rceil })+bw(1-(bw)^{\lfloor k/2\rfloor })}{1-bw}.\] Notice that $g_k(w,b)$ corresponds to the contribution of all possible letters at an even index. Since letters at different indices are independent, their joint contribution is obtained by multiplication. Thus, \[f_n(b,w) = \begin{cases}
(g_k(b,w)g_k(w,b))^{m}, & \text{if }n=2m;\\
(g_k(b,w)g_k(w,b))^{m}g_k(b,w), & \text{if }n=2m+1.
\end{cases}\] It follows that 
\begin{align}
F_{k}(x,b,w)	&=\sum_{m\geq0}(g_k(b,w)g_k(w,b))^{m}x^{2m}+\sum_{m\geq0}(g_k(b,w)g_k(w,b))^{m}g_k(b,w)x^{2m+1}\nonumber\\
&=(1+xg_k(b,w))\sum_{m\geq0}(x^{2}g_k(b,w)g_k(w,b))^{m}\nonumber\\
&=\frac{1+xg_k(b,w)}{1-x^{2}g_k(b,w)g_k(w,b)}.\nonumber \qedhere
\end{align} 
\end{proof}

\begin{proposition}\label{t401}
Denote by $\textnormal{bal}_k(n)$ the number of bw-balanced $k$-ary words of length $n$ and let $\textnormal{BAL}_k(x)=\sum_{n\geq0}\textnormal{bal}_k(n)x^n$ be the corresponding generating function.  
Then $\textnormal{BAL}_1(x) = 1/(1-x^2)$ and, for $k\geq 2$,
\begin{equation}\label{e871}
\textnormal{BAL}_k(x) =\frac{1}{\Delta_k}\left(1+\lfloor k/2\rfloor x+\frac{1-\left(\lfloor k/2\rfloor ^{2}+\lceil k/2\rceil ^{2}\right)x^{2}-\Delta_k}{2\lfloor k/2\rfloor x}\right),
\end{equation} where 
\[\Delta_k=\sqrt{\left(1-\left(\lfloor k/2\rfloor ^{2}+\lceil k/2\rceil ^{2}\right)x^{2}\right)^{2}-4\lfloor k/2\rfloor ^{2}\lceil k/2\rceil ^{2}x^{4}}.\]
Furthermore, let $\alpha=\lceil n/2\rceil-\lfloor n/2\rfloor$. Then, for every $k\geq 1$,
\begin{equation}\label{e819}
\textnormal{bal}_k(n)=
\begin{cases}
\left(\frac{k}{2}\right)^n\binom{n}{\lfloor n/2\rfloor},&\textnormal{if $k$ is even}; \\ 
\left(\frac{k-1}{2}\right)^{\alpha}k^{\lfloor n/2\rfloor}P_{\lfloor n/2\rfloor}^{(\alpha,0)}\left(\frac{k^{2}+1}{2k}\right), &\textnormal{if $k$ is odd}. 
\end{cases}
\end{equation}
\end{proposition}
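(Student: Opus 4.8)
The plan is to extract the bw-balanced count from the generating function $F_k(x,b,w)$ of Theorem~\ref{t92} by the substitution that isolates the diagonal $\black = \white$. Concretely, for a polynomial $P(b,w)$ the sum of coefficients of monomials with equal exponents is $[\,t^0\,]\,P(t,t^{-1})$, so $\mathrm{bal}_k(n) = [t^0] f_n(t,t^{-1})$ and hence $\mathrm{BAL}_k(x) = [t^0] F_k(x,t,t^{-1})$. The first step is therefore to compute $g_k(t,t^{-1})$ and $g_k(t^{-1},t)$: since $bw = 1$ specializes badly in the closed form, I would instead go back to the sum $g_k(b,w) = \sum_{h=1}^k b^{\lceil h/2\rceil} w^{\lfloor h/2\rfloor}$ and set $b=t$, $w=t^{-1}$, giving $g_k(t,t^{-1}) = \sum_{h=1}^k t^{\lceil h/2\rceil - \lfloor h/2\rfloor} = \lceil k/2\rceil\, t + \lfloor k/2\rfloor\, t^{-1}$ (pairing odd $h$, which gives $t$, with even $h$, which gives $1$, wait—recompute: $\lceil h/2\rceil - \lfloor h/2\rfloor$ is $1$ for odd $h$ and $0$ for even $h$, so actually $g_k(t,t^{-1}) = \lceil k/2\rceil\, t + \lfloor k/2\rfloor$). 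Symmetrically $g_k(t^{-1},t) = \lceil k/2\rceil\, t^{-1} + \lfloor k/2\rfloor$. Plugging these into $F_k$ gives a rational function of $t$ and $x$, and $\mathrm{BAL}_k(x)$ is its constant term in $t$, computable by a residue/partial-fraction argument in $t$; this should reproduce \eqref{e871} after the $k=1$ case is checked separately (there $g_1(t,t^{-1}) = t$, and $F_1 = (1+xt)/(1-x^2) \cdot$, whose $t^0$ part is $1/(1-x^2)$).

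For the explicit formula \eqref{e819}, I would work directly with $f_n$ rather than the generating function. By Theorem~\ref{t92}, $f_n(b,w)$ is a product of $g_k(b,w)$ and $g_k(w,b)$ factors; under $b=t,w=t^{-1}$ each such pair contributes $g_k(t,t^{-1})\,g_k(t^{-1},t) = (\lceil k/2\rceil t + \lfloor k/2\rfloor)(\lceil k/2\rceil t^{-1} + \lfloor k/2\rfloor)$, and for odd $n$ there is one extra factor $g_k(t,t^{-1}) = \lceil k/2\rceil t + \lfloor k/2\rfloor$. So with $m = \lfloor n/2\rfloor$,
\[
f_n(t,t^{-1}) = \bigl(\lceil k/2\rceil t + \lfloor k/2\rfloor\bigr)^{m+\alpha}\bigl(\lceil k/2\rceil t^{-1} + \lfloor k/2\rfloor\bigr)^{m},
\]
and $\mathrm{bal}_k(n)$ is the constant term. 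Expanding both binomials and matching powers of $t$, the constant term is
\[
\mathrm{bal}_k(n) = \sum_{i} \binom{m+\alpha}{i+\alpha}\binom{m}{i} \lceil k/2\rceil^{\,2i+\alpha}\,\lfloor k/2\rfloor^{\,2m-2i}.
\]
When $k$ is even, $\lceil k/2\rceil = \lfloor k/2\rfloor = k/2$, so the power of $k/2$ is constant ($2m+\alpha = n$) and the sum collapses by Vandermonde to $\binom{n}{\lfloor n/2\rfloor}(k/2)^n$, as claimed. When $k$ is odd, $\lceil k/2\rceil = (k+1)/2$ and $\lfloor k/2\rfloor = (k-1)/2$; factoring out $((k-1)/2)^{2m}$ and writing the remaining sum in the variable $x = \lceil k/2\rceil^2/\lfloor k/2\rfloor^2$ puts it in exactly the shape of the left side of the Jacobi identity \eqref{e651} with $\beta = 0$, which converts it into $P_m^{(\alpha,0)}$ evaluated at $(x+1)/(x-1)$; the arithmetic $(x+1)/(x-1) = (k^2+1)/(2k)$ and the bookkeeping of the prefactor $((k-1)/2)^{\alpha}k^{m}$ then yield the stated closed form.

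The main obstacle I expect is matching the combinatorial sum to the normalization in \eqref{e651} cleanly: the identity there has $\binom{n+\alpha}{i}\binom{n+\beta}{n-i}$, whereas my sum naturally produces $\binom{m+\alpha}{i+\alpha}\binom{m}{i}$, so I will need a reindexing ($i \mapsto i+\alpha$ or $i \mapsto m-i$) together with the symmetry $\binom{m+\alpha}{i+\alpha} = \binom{m+\alpha}{m-i}$ to line the two up, and then track carefully which argument becomes $x = \lceil k/2\rceil^2/\lfloor k/2\rfloor^2$ versus $1/x$ and how the factor $(x-1)^m$ in \eqref{e651} interacts with the powers of $\lfloor k/2\rfloor$ pulled out front. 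A secondary check is the consistency of the two parts of the proposition: \eqref{e871} should be the generating function of \eqref{e819}, which I would verify at least at the level of low-order coefficients, and also confirm that the odd-$k$ Jacobi formula reduces correctly when $k=1$ (where $P_m^{(\alpha,0)}(1) = \binom{m+\alpha}{m}$, recovering $\mathrm{bal}_1(n) \in \{0,1\}$ appropriately, i.e. $1$ when $n$ is even and... one must check $\alpha=1$ gives the right value).
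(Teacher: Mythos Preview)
Your approach is essentially the paper's: both extract $\mathrm{BAL}_k(x)$ as $[t^0]F_k(x,t,t^{-1})$ after computing $g_k(t,t^{-1})=\lceil k/2\rceil t+\lfloor k/2\rfloor$, and both finish the closed form via Vandermonde (even $k$) and the Jacobi identity \eqref{e651} (odd $k$). The one genuine difference is in deriving the intermediate sum: the paper argues combinatorially that a word is bw-balanced iff it has the same number of odd letters at odd positions as at even positions, then counts directly to get
\[
\mathrm{bal}_k(n)=\sum_{r=0}^{\lfloor n/2\rfloor}\binom{\lceil n/2\rceil}{r}\binom{\lfloor n/2\rfloor}{r}\lceil k/2\rceil^{2r}\lfloor k/2\rfloor^{\,n-2r},
\]
whereas you reach the same sum mechanically by expanding the product $f_n(t,t^{-1})=(\lceil k/2\rceil t+\lfloor k/2\rfloor)^{\lceil n/2\rceil}(\lceil k/2\rceil t^{-1}+\lfloor k/2\rfloor)^{\lfloor n/2\rfloor}$. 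Your route is more automatic; the paper's gives a cleaner combinatorial picture.

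One slip to fix: your displayed constant-term formula is wrong when $\alpha=1$. Matching $t^j$ from the first factor with $t^{-j}$ from the second gives
\[
\sum_{j=0}^{m}\binom{m+\alpha}{j}\binom{m}{j}\lceil k/2\rceil^{2j}\lfloor k/2\rfloor^{\,2m+\alpha-2j},
\]
not $\sum_i \binom{m+\alpha}{i+\alpha}\binom{m}{i}\lceil k/2\rceil^{2i+\alpha}\lfloor k/2\rfloor^{2m-2i}$. (Test $k=3$, $n=3$: the correct sum gives $1+2\cdot 4=9$, your version gives $4+8=12$; the table has $\mathrm{bal}_3(3)=9$.) With the corrected sum, the reindexing you anticipate for \eqref{e651} becomes trivial: it is already $\sum_r \binom{m+\alpha}{r}\binom{m}{m-r}x^r$ with $x=\lceil k/2\rceil^2/\lfloor k/2\rfloor^2$.
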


\begin{proof}
If $k=1$, there is only one word for each $n$, namely, $1\cdots 1$. This word is bw-balanced if and only if $n$ is even. The corresponding generating function is then $\textnormal{BAL}_1(x)=1/(1-x^2)$ and it is easy to see that \eqref{e819} holds true also in this case.

Assume now that $k\geq 2$ and set 
\begin{align*}
A&=1-(\lfloor k/2\rfloor^2+\lceil k/2\rceil^2)x^2,\\
B&= \lfloor k/2\rfloor \lceil k/2\rceil x^2.
\end{align*} Notice that $\Delta_k = \sqrt{A^2-4B^2}$. Let $\sum_{n\in\mathbb{Z}} c_n t^n$ be the Laurent series of $1/(A-B(t+t^{-1}))$. It is not hard to see that \[c_0=\frac{1}{\Delta_k},\qquad c_{-1} =\frac{A-\Delta_k}{2B\Delta_k}.\] Thus, 
\begin{align*}
\textnormal{BAL}_k(x)&=[t^0]F_k(x,t,t^{-1})\\
&=[t^0]\frac{1+\lfloor k/2\rfloor x+\lceil k/2\rceil xt}{A-B\left(t+t^{-1}\right)}\\
&=\frac{1+\lfloor k/2\rfloor x}{\Delta_k} + \frac{A-\Delta_k}{2\lfloor k/2\rfloor x\Delta_k},
\end{align*} proving \eqref{e871}.

We now wish to prove \eqref{e819}. For a letter $h\in[k]$ at index $i\in[n]$ we have
\[
\black_i(h)-\white_i(h)=
\begin{cases}
+1,& \textnormal{if $h$ is odd and $i$ is odd};\\
-1,& \textnormal{if $h$ is odd and $i$ is even};\\
0,& \textnormal{if $h$ is even}.
\end{cases}
\]
Thus, a word is bw-balanced if and only if the number of odd letters at odd indices is equal to the number of odd letters at even indices. Let $r$ be this common number. Clearly, $r\in\{0,1,\ldots,\lfloor n/2\rfloor\}$. There are $\binom{\lceil n/2\rceil}{r}$ ways to choose which of the letters at odd indices will be odd, and $\binom{\lfloor n/2\rfloor}{r}$ ways to choose which of the letters at even indices will be odd. For each of the $2r$ odd letters, there are $\lceil k/2\rceil$ possibilities. For each of the rest $n-2r$ letters, which are even, there are
$\lfloor k/2\rfloor$ possibilities. Thus, the number of bw-balanced words for this $r$ is 
\begin{equation}\label{e104}
\binom{\lceil n/2\rceil}{r}\binom{\lfloor n/2\rfloor}{r}
\lceil k/2\rceil^{2r}
\lfloor k/2\rfloor^{n-2r}.
\end{equation}
Summing \eqref{e104} over all possible values for $r$ gives 
\begin{equation}\label{e931}
\textnormal{bal}_k(n)=\sum_{r=0}^{\lfloor n/2\rfloor}
\binom{\lceil n/2\rceil}{r}\binom{\lfloor n/2\rfloor}{r}
\lceil k/2\rceil^{2r}
\lfloor k/2\rfloor^{n-2r}.
\end{equation} Suppose that $k$ is even. Then
$\lceil k/2\rceil=\lfloor k/2\rfloor=k/2$. Using Vandermonde's identity, we have
\[\textnormal{bal}_k(n)=
(k/2)^n\sum_{r=0}^{\lfloor n/2\rfloor}\binom{\lceil n/2\rceil}{r}\binom{\lfloor n/2\rfloor}{r}=(k/2)^n\binom{n}{\lfloor n/2\rfloor}.
\]
Assume now that $k$ is odd and let 
\[q = \left(\frac{\lceil k/2\rceil}{\lfloor k/2\rfloor}\right)^{2}=\left(\frac{k+1}{k-1}\right)^{2}.\] By \eqref{e931}, 
\begin{align*}
\textnormal{bal}_{k}(n)&=\left(\frac{k-1}{2}\right)^n\sum_{r=0}^{\lfloor n/2\rfloor}\binom{\lfloor n/2\rfloor+\alpha}{r}\binom{\lfloor n/2\rfloor}{r}q^{r}\\
&=\left(\frac{k-1}{2}\right)^n(q-1)^{\lfloor n/2\rfloor}P_{\lfloor n/2\rfloor}^{(\alpha,0)}\left(\frac{q+1}{q-1}\right)\\
&=\left(\frac{k-1}{2}\right)^n\frac{(4k)^{\lfloor n/2\rfloor}}{(k-1)^{2\lfloor n/2\rfloor}}P_{\lfloor n/2\rfloor}^{(\alpha ,0)}\left(\frac{k^{2}+1}{2k}\right)\\&=\left(\frac{k-1}{2}\right)^{\alpha}k^{\lfloor n/2\rfloor}P_{\lfloor n/2\rfloor}^{(\alpha,0)}\left(\frac{k^{2}+1}{2k}\right),
\end{align*} where in the second equality we used \eqref{e651}.
\end{proof}

In Table~\ref{tab1} below we list the initial values of $\textnormal{bal}_k(n)$, for $k=1,\ldots,6$ and $n=0,\ldots,10$.

\begin{table}[H]
\centering
\small
\setlength{\tabcolsep}{5pt}
\renewcommand{\arraystretch}{1.15}
\begin{tabular}{c|rrrrrrrrrrr}
\hline
$k \backslash n$ & 0 & 1 & 2 & 3 & 4 & 5 & 6 & 7 & 8 & 9 & 10 \\
\hline
1 & 1 & 0 & 1 & 0 & 1 & 0 & 1 & 0 & 1 & 0 & 1 \\
2 & 1 & 1 & 2 & 3 & 6 & 10 & 20 & 35 & 70 & 126 & 252 \\
3 & 1 & 1 & 5 & 9 & 33 & 73 & 245 & 593 & 1921 & 4881 & 15525 \\
4 & 1 & 2 & 8 & 24 & 96 & 320 & 1280 & 4480 & 17920 & 64512 & 258048 \\
5 & 1 & 2 & 13 & 44 & 241 & 950 & 5005 & 21080 & 109345 & 477962 & 2458573 \\
6 & 1 & 3 & 18 & 81 & 486 & 2430 & 14580 & 76545 & 459270 & 2480058 & 14880348 \\
\hline
\end{tabular}
\caption{Number of bw-balanced $k$-ary words of length $n$ for $k=1,\dots,6$ and $n=0,\dots,10$.}
\label{tab1}
\end{table}

\begin{remark}
Row $k=2$ in Table \ref{tab1} corresponds to \seqnum{A001405}. Row $k=3$ coincides with \seqnum{A084771}, but only for even $n$. Row $k=4$ coincides with \seqnum{A060899}, which is defined only for even $n$. Nevertheless, the lattice–path interpretations of the latter two sequences do not seem to indicate a more general connection between bw-balanced $k$-ary words and lattice paths.
\end{remark}

We now establish asymptotic proportion of bw-balanced $k$-ary words. 

\begin{corollary}
Assume that $k\geq 2$. Then
\[
\frac{\mathrm{bal}_k(n)}{k^n}\sim\sqrt{\frac{2}{\pi n}}\times 
\begin{cases}
1, & \textnormal{if $k$ is even};\\
\sqrt{\frac{k^2}{k^2-1}}, & \textnormal{if $k$ is odd}.
\end{cases}
\]
\end{corollary}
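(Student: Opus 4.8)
The plan is to read off the asymptotics directly from the generating function $\textnormal{BAL}_k(x)$ of Proposition~\ref{t401} by singularity analysis, which is legitimate since $\textnormal{BAL}_k$ is algebraic. Write $p=\lfloor k/2\rfloor$ and $q=\lceil k/2\rceil$, so that $p+q=k$ and $q-p\in\{0,1\}$. First I would factor the discriminant appearing in $\Delta_k$: using $p+q=k$,
\[
\Delta_k^{2}=\bigl(1-(p^{2}+q^{2})x^{2}\bigr)^{2}-4p^{2}q^{2}x^{4}=(1-k^{2}x^{2})\bigl(1-(q-p)^{2}x^{2}\bigr),
\]
so $\textnormal{BAL}_k(x)$ is analytic except at the square-root branch points $x=\pm1/k$ and, for odd $k$, at $x=\pm1$. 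Since $k\ge2$, the singularities of smallest modulus are exactly $x=\pm1/k$, and these govern the coefficient asymptotics.

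Next I would compute the local behaviour at $x=\pm1/k$. From the proof of Proposition~\ref{t401}, $\textnormal{BAL}_k(x)=\Delta_k^{-1}\bigl(1+px+\tfrac{A}{2px}\bigr)-\tfrac1{2px}$ with $A=1-(p^{2}+q^{2})x^{2}$, and the last summand is analytic at $x=\pm1/k$. Using $A(\pm1/k)=2pq/k^{2}$ one gets
\[
\Bigl(1+px+\tfrac{A(x)}{2px}\Bigr)\Big|_{x=1/k}=1+\tfrac{p+q}{k}=2,\qquad\Bigl(1+px+\tfrac{A(x)}{2px}\Bigr)\Big|_{x=-1/k}=1-\tfrac{p+q}{k}=0,
\]
and, since $1-k^{2}x^{2}=(1-kx)(1+kx)$ and $k^{2}-(q-p)^{2}=4pq$, one finds $\Delta_k\sim\tfrac{2\sqrt{2pq}}{k}\sqrt{1-kx}$ as $x\to1/k$. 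Hence near $x=1/k$,
\[
\textnormal{BAL}_k(x)=\frac{k}{\sqrt{2pq}}\,(1-kx)^{-1/2}+O\bigl((1-kx)^{1/2}\bigr),
\]
while near $x=-1/k$ the vanishing of the bracket leaves only $\textnormal{BAL}_k(x)=O\bigl((1+kx)^{1/2}\bigr)$.

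Finally I would apply the transfer theorem: the dominant contribution is that of $\tfrac{k}{\sqrt{2pq}}(1-kx)^{-1/2}$, for which $[x^{n}](1-kx)^{-1/2}=\binom{2n}{n}4^{-n}k^{n}\sim k^{n}/\sqrt{\pi n}$, whereas $\textnormal{BAL}_k(x)-\tfrac{k}{\sqrt{2pq}}(1-kx)^{-1/2}$ vanishes like a square root at each of its dominant singularities $x=\pm1/k$, so its coefficients are $O(k^{n}n^{-3/2})$. Therefore
\[
\frac{\textnormal{bal}_k(n)}{k^{n}}\sim\frac{k}{\sqrt{2pq}}\cdot\frac{1}{\sqrt{\pi n}}=\sqrt{\frac{2}{\pi n}}\,\sqrt{\frac{k^{2}}{4pq}},
\]
and the statement follows since $4pq=k^{2}$ when $k$ is even and $4pq=k^{2}-1$ when $k$ is odd. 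The step I expect to be the main obstacle is that $x=1/k$ and $x=-1/k$ have the same modulus, so their contributions are a priori comparable; the argument is rescued by the exact cancellation $1-\tfrac{p+q}{k}=0$ at $x=-1/k$, which pushes that singularity below order $k^{n}n^{-1/2}$. As a cross-check, and an alternative route, the even case is immediate from $\textnormal{bal}_k(n)=(k/2)^{n}\binom{n}{\lfloor n/2\rfloor}$ together with the classical $\binom{n}{\lfloor n/2\rfloor}\sim 2^{n}\sqrt{2/(\pi n)}$, while the odd case can instead be obtained from the Darboux asymptotics of $P_{\lfloor n/2\rfloor}^{(\alpha,0)}$ at the fixed argument $z=(k^{2}+1)/(2k)$, where conveniently $z+\sqrt{z^{2}-1}=k$.
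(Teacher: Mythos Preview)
Your proof is correct and takes a genuinely different route from the paper. The paper works directly from the closed form \eqref{e819}: for even $k$ it applies Stirling to $\binom{n}{\lfloor n/2\rfloor}$, and for odd $k$ it invokes the Darboux--Szeg\H{o} asymptotics \cite[(8.21.9)]{Sz} for $P_m^{(\alpha,0)}$ at the fixed argument $z=(k^2+1)/(2k)$, exploiting the pleasant identities $z+\sqrt{z^2-1}=k$ and $(z^2-1)^{1/4}=\sqrt{(k^2-1)/(2k)}$. You instead perform singularity analysis on the algebraic generating function $\textnormal{BAL}_k(x)$: the key step is the factorisation $\Delta_k^2=(1-k^2x^2)(1-(q-p)^2x^2)$, which isolates the dominant singularities $x=\pm1/k$, together with the observation that the numerator vanishes at $x=-1/k$ so that only $x=1/k$ contributes at order $n^{-1/2}$. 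Your approach has the advantage of treating the even and odd cases uniformly and of relying only on standard transfer machinery rather than special-function asymptotics; the paper's approach, in turn, bypasses any delicate analysis of competing singularities and makes the constants fall out mechanically from the Jacobi formula. You yourself flag the alternative Jacobi route at the end, which is in fact exactly what the paper does.
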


\begin{proof}
First assume that $k$ is even. By \eqref{e819} and using Stirling's formula, 
\[
\frac{\textnormal{bal}_k(n)}{k^n}=
\frac{1}{2^n}\binom{n}{\lfloor n/2\rfloor}
\sim\frac{1}{2^n}\sqrt{\frac{2}{\pi n}}2^{n}=\sqrt{\frac{2}{\pi n}}.
\] Assume now that $k$ is odd. By \eqref{e819},
\begin{equation}\label{e820}
\textnormal{bal}_k(n)=
\left(\frac{k-1}{2}\right)^{\alpha}k^{\lfloor n/2\rfloor}P_{\lfloor n/2\rfloor}^{(\alpha,0)}\left(\frac{k^{2}+1}{2k}\right).
\end{equation} 
By \cite[(8.21.9)]{Sz}, for $x\notin[-1,1]$, arbitrary $\alpha,\beta\in\RR$, and large $m\in\NN$, 
\begin{equation}\label{e734} 
P_{m}^{(\alpha,\beta)}(x)\sim\frac{(\sqrt{x+1}+\sqrt{x-1})^{\alpha+\beta}(x+\sqrt{x^{2}-1})^{m+1/2}}{\sqrt{2\pi m}(x^{2}-1)^{1/4}\sqrt{(x-1)^{\alpha}}\sqrt{(x+1)^{\beta}}}.
\end{equation}
Set $\beta=0$ and let $x=(k^2+1)/2k$. Then $x>1$ and 
\begin{align*}
x+\sqrt{x^2-1}&=k,& \sqrt{x+1}+\sqrt{x-1}&=\sqrt{2k},\\
\sqrt{(x-1)^{\alpha}}&=\left(\frac{k-1}{\sqrt{2k}}\right)^{\alpha},
&(x^2-1)^{1/4}&=\sqrt{\frac{k^2-1}{2k}}.
\end{align*} Thus, \eqref{e734} reduces to
\begin{equation}\label{ez1}
P_m^{(\alpha,0)}\left(\frac{k^2+1}{2k}\right)
\sim \frac{k^m}{\sqrt{2\pi m}}
\sqrt{\frac{2k^{2}}{k^{2}-1}}
\Big(\frac{2k}{k-1}\Big)^{\alpha}.
\end{equation} Set $\alpha=\lceil n/2\rceil - \lfloor n/2\rfloor$ and notice that $n=2\lfloor n/2\rfloor+\alpha$. It follows from \eqref{e820} and \eqref{ez1} that
\begin{align*}    
\mathrm{bal}_k(n) &\sim \left(\frac{k-1}{2}\right)^{\alpha}\Big(\frac{2k}{k-1}\Big)^{\alpha}\frac{k^{2\lfloor n/2\rfloor}}{\sqrt{2\pi\lfloor n/2\rfloor}}\sqrt{\frac{2k^{2}}{k^{2}-1}}\\
&=\frac{k^{2\lfloor n/2\rfloor+\alpha}}{\sqrt{2\pi\lfloor n/2\rfloor}}\sqrt{\frac{2k^{2}}{k^{2}-1}}\\
&=\frac{k^n}{\sqrt{2\pi\lfloor n/2\rfloor}}\sqrt{\frac{2k^{2}}{k^{2}-1}}.
\end{align*}
Dividing both sides by $k^n$ and using $\lfloor n/2\rfloor\sim n/2$, the assertion follows.
\end{proof}

\subsection{Permutations}

For $n\geq 1$ denote by $\mathbf{1}_n$ the all-ones vector of length $n$. We shall need the following result, concerned with the permanent of a special kind of matrix. The reader is referred to  \cite[Chapter 7]{Br} for general properties of the permanent.

\begin{lemma}\label{l1}
Let $n\geq 1$ and let $0\leq m\leq n$. Let $v=(v_1,\dots,v_n)^T\in\RR^n$ and let $A$ be a square matrix of size $n$ with $m$ columns equal to $\mathbf{1}_n$, and the rest of the columns equal to $v$. Then
\[
\textnormal{perm}(A)
= m!(n-m)!\sum_{\substack{R\subseteq[n]\\ |R|=n-m}}\ \prod_{i\in R} v_i.
\]
\end{lemma}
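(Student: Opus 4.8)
The plan is to expand the permanent along the definition $\textnormal{perm}(A)=\sum_{\sigma\in S_n}\prod_{j=1}^n A_{\sigma(j),j}$ and exploit the fact that each column is one of only two vectors. Let $C\subseteq[n]$ be the set of the $m$ columns equal to $\mathbf 1_n$ and let $D=[n]\setminus C$ be the $n-m$ columns equal to $v$. For a fixed permutation $\sigma$, the product $\prod_j A_{\sigma(j),j}$ splits as $\prod_{j\in C}1\cdot\prod_{j\in D}v_{\sigma(j)}=\prod_{j\in D}v_{\sigma(j)}$, so only the multiset of row-indices $\{\sigma(j):j\in D\}$ matters, not the identity of the columns in $C$ or $D$ individually.

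The key combinatorial step is then to reorganize the sum over $\sigma\in S_n$ by first choosing the image set $R=\sigma(D)\subseteq[n]$ with $|R|=n-m$, and then counting how many $\sigma$ realize that choice. Given such an $R$, the number of bijections from $D$ onto $R$ is $(n-m)!$ and, independently, the number of bijections from $C$ onto $[n]\setminus R$ is $m!$; moreover for every such $\sigma$ the product contributed is exactly $\prod_{i\in R}v_i$ (here one uses that $\sigma$ restricted to $D$ is a bijection onto $R$, so $\prod_{j\in D}v_{\sigma(j)}=\prod_{i\in R}v_i$). Summing over all $R$ of size $n-m$ yields
\[
\textnormal{perm}(A)=\sum_{\substack{R\subseteq[n]\\|R|=n-m}} m!\,(n-m)!\prod_{i\in R}v_i
= m!\,(n-m)!\sum_{\substack{R\subseteq[n]\\|R|=n-m}}\ \prod_{i\in R}v_i,
\]
which is the claimed identity.

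I do not anticipate a genuine obstacle here; the only point requiring a little care is the bookkeeping of the factor $m!\,(n-m)!$, namely checking that for a fixed target set $R$ the permutations $\sigma$ with $\sigma(D)=R$ are in bijection with (bijections $D\to R$) $\times$ (bijections $C\to[n]\setminus R$), which is immediate once one notes that $\sigma$ is determined by and freely recoverable from these two pieces. One should also handle the boundary cases $m=0$ (then $A$ has all columns equal to $v$ and the formula reduces to $n!\prod_{i\in[n]}v_i=\textnormal{perm}(A)$, the permanent of a rank-one matrix) and $m=n$ (then $A=\mathbf 1_n\mathbf 1_n^T$ up to column labels, the empty product on the right is $1$, and $\textnormal{perm}(A)=n!$), both of which are consistent with the stated formula and can be mentioned in one line. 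If a cleaner exposition is desired, one can instead phrase the argument via the multilinearity of the permanent in its columns, writing $A$'s columns and expanding, but the direct counting above is the most transparent route.
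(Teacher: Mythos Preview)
Your proposal is correct and follows essentially the same approach as the paper's proof: both expand the permanent from its definition, observe that only the set of row indices paired with the $v$-columns matters, and then count that for each such set $R$ of size $n-m$ there are exactly $m!\,(n-m)!$ contributing permutations. The only cosmetic differences are that the paper first permutes the columns so the $\mathbf 1_n$-columns occupy positions $1,\dots,m$ (whereas you work directly with an arbitrary column set $C$), and the paper indexes the permanent as $\sum_\pi\prod_i A_{i,\pi_i}$ rather than $\sum_\sigma\prod_j A_{\sigma(j),j}$.
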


\begin{proof}
By the invariance of the permanent under permutations of the columns, we may assume that the $j$th column of $A$ is $\mathbf{1}_n$, for each $1\leq j\leq m$, and is $v$, for each $m+1\leq j\leq n$. We have
\begin{align*}
\textnormal{perm}(A)&=\sum_{\pi\in S_n}\prod_{i\in[n]}A_{i,\pi_i}\\
&=\sum_{\pi\in S_n}\left(\prod_{\substack{i\in [n]\\ 1\leq \pi_i\leq m}}1\right)\left(\prod_{\substack{i\in [n]\\ m+1\leq \pi_i\leq n}}v_i\right)\\
&=\sum_{\substack{R\subseteq[n]\\ |R|=n-m}} \ \sum_{\substack{\pi\in S_n \textnormal{ with}\\ m+1\leq \pi_i\leq n,\;\forall i\in R}}\ \prod_{i\in R} v_i\\
&=m!(n-m)!\sum_{\substack{R\subseteq[n]\\ |R|=n-m}}\ \prod_{i\in R} v_i,
\end{align*} as claimed.
\end{proof}

Denote by $f_n(b,w)$ the polynomial in $b$ and $w$ enumerating the permutations of $[n]$ according to their black-white cell capacity, i.e.,
\[f_n(b,w)=\sum_{\pi\in S_n}b^{\black(\pi)}w^{\white(\pi)}.\]

\begin{theorem}\label{t93}
Set $\alpha=\lceil n/2\rceil-\lfloor n/2\rfloor$. Then
\begin{equation}\label{r4}
f_n(b,w)
=(bw)^{\lfloor n^2/4\rfloor}\lfloor n/2\rfloor!\lceil n/2\rceil!b^{\alpha}(w-b)^{\lfloor n/2\rfloor}P_{\lfloor n/2\rfloor}^{(\alpha,0)}\left(\frac{w+b}{w-b}\right).
\end{equation}
\end{theorem}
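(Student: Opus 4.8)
The plan is to compute $f_n(b,w)$ directly as a permanent and then match it with the Jacobi polynomial identity \eqref{e651}. First I would observe that for a permutation $\pi\in S_n$, the letter at index $i$ is $\pi_i$, and it contributes $b^{\black_i(\pi_i)}w^{\white_i(\pi_i)}$. Writing $h=\pi_i$, note $\black_i(h)+\white_i(h)=h$ always, and the difference $\black_i(h)-\white_i(h)$ equals $+1$ if $h$ is odd and $i$ is odd, $-1$ if $h$ is odd and $i$ is even, and $0$ if $h$ is even. So the monomial contributed at index $i$ is $(bw)^{h/2}$ up to a factor of $b$ (resp.\ $w$) when $h$ is odd and $i$ is odd (resp.\ even). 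Hence $f_n(b,w)=\sum_{\pi\in S_n}\prod_{i=1}^n M_{i,\pi_i}$ where $M_{i,h}=(bw)^{\lfloor h/2\rfloor}\cdot\varepsilon_{i,h}$ with $\varepsilon_{i,h}=b$ if $i,h$ both odd, $w$ if $i$ even and $h$ odd, and $1$ if $h$ even. That is, $f_n(b,w)=\operatorname{perm}(M)$.

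The next step is to factor $M$. Since $\sum_i$ ranges over a permutation, I can pull out of column $h$ the common factor $(bw)^{\lfloor h/2\rfloor}$; the product of these over all columns is $(bw)^{\sum_{h=1}^n\lfloor h/2\rfloor}=(bw)^{\lfloor n^2/4\rfloor}$, which explains the prefactor in \eqref{r4}. What remains is $\operatorname{perm}(E)$ where $E_{i,h}=\varepsilon_{i,h}$. Now the key structural observation: the entry $\varepsilon_{i,h}$ depends only on the parities of $i$ and $h$ — it is $1$ whenever $h$ is even (regardless of $i$), and it is $b$ or $w$ according to the parity of $i$ whenever $h$ is odd. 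So in the matrix $E$, every even-indexed column is the all-ones vector $\mathbf 1_n$, and every odd-indexed column equals the fixed vector $v=(v_1,\dots,v_n)^T$ with $v_i=b$ for $i$ odd and $v_i=w$ for $i$ even. There are $m=\lfloor n/2\rfloor$ even columns and $n-m=\lceil n/2\rceil$ odd columns, so Lemma~\ref{l1} applies directly (with the roles of the two column types; note $|R|=n-m=\lceil n/2\rceil$). It yields
\[
\operatorname{perm}(E)=\lfloor n/2\rfloor!\,\lceil n/2\rceil!\sum_{\substack{R\subseteq[n]\\ |R|=\lceil n/2\rceil}}\prod_{i\in R}v_i.
\]

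The final step is to evaluate the sum $\sum_{|R|=\lceil n/2\rceil}\prod_{i\in R}v_i$, the elementary symmetric polynomial $e_{\lceil n/2\rceil}$ in the multiset $\{v_1,\dots,v_n\}=\{b^{(\lceil n/2\rceil)},\,w^{(\lfloor n/2\rfloor)}\}$ (that is, $b$ with multiplicity $\lceil n/2\rceil$ and $w$ with multiplicity $\lfloor n/2\rfloor$). Choosing $R$ means choosing $j$ of the odd-index slots (contributing $b^j$) and $\lceil n/2\rceil-j$ of the even-index slots (contributing $w^{\lceil n/2\rceil - j}$), giving
\[
e_{\lceil n/2\rceil}=\sum_{j}\binom{\lceil n/2\rceil}{j}\binom{\lfloor n/2\rfloor}{\lceil n/2\rceil-j}b^j w^{\lceil n/2\rceil-j}.
\]
Reindexing by $i=\lceil n/2\rceil - j$ (so the sum runs $i=0,\dots,\lfloor n/2\rfloor$) and writing $m=\lfloor n/2\rfloor$, $\lceil n/2\rceil = m+\alpha$, this becomes $b^{m+\alpha}\sum_{i=0}^{m}\binom{m+\alpha}{i}\binom{m}{m-i}(w/b)^i$, which is exactly the left side of \eqref{e651} with $n\mapsto m$, $\alpha\mapsto\alpha$, $\beta\mapsto 0$, $x\mapsto w/b$, up to the factor $b^{m+\alpha}$. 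Applying \eqref{e651} gives $b^{m+\alpha}(w/b-1)^m P_m^{(\alpha,0)}\!\left(\frac{w/b+1}{w/b-1}\right)=b^{\alpha}(w-b)^m P_m^{(\alpha,0)}\!\left(\frac{w+b}{w-b}\right)$. Assembling the three factors — $(bw)^{\lfloor n^2/4\rfloor}$, $\lfloor n/2\rfloor!\lceil n/2\rceil!$, and $b^\alpha(w-b)^{\lfloor n/2\rfloor}P_{\lfloor n/2\rfloor}^{(\alpha,0)}(\tfrac{w+b}{w-b})$ — yields \eqref{r4}. The main obstacle is not any single hard computation but getting the bookkeeping exactly right: verifying $\black_i(h)+\white_i(h)=h$ and the sign pattern, confirming $\sum_{h=1}^n\lfloor h/2\rfloor=\lfloor n^2/4\rfloor$, correctly identifying which column type has which multiplicity so Lemma~\ref{l1} is invoked with the right parameters, and tracking the reindexing so the elementary symmetric polynomial lines up with the binomial sum in \eqref{e651}; a secondary point is to check the boundary cases (e.g.\ $n=1$, and parity of $n$) so that the $\alpha$-exponents and the $P_0^{(\alpha,0)}=1$ case are consistent.
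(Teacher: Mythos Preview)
Your proposal is correct and follows essentially the same route as the paper: express $f_n(b,w)$ as $\operatorname{perm}(M)$, factor $(bw)^{\lfloor j/2\rfloor}$ from each column to extract $(bw)^{\lfloor n^2/4\rfloor}$, apply Lemma~\ref{l1} to the residual matrix whose even columns are $\mathbf{1}_n$ and whose odd columns are $v=(b,w,b,\ldots)^T$, and then match the resulting binomial sum to the Jacobi identity~\eqref{e651}. The only cosmetic difference is that you phrase the final sum as an elementary symmetric polynomial before reindexing, whereas the paper goes directly to the binomial sum; the computations are otherwise identical.
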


\begin{proof}
Let $M$ be the square matrix of size $n$ defined by
\[
M_{ij}=
\begin{cases}
b^{\lceil j/2\rceil}w^{\lfloor j/2\rfloor}, & \textnormal{if $i$ is odd},\\
b^{\lfloor j/2\rfloor}w^{\lceil j/2\rceil}, & \textnormal{if $i$ is even}.
\end{cases}
\] Clearly, \[f_n(b,w)=\displaystyle\sum_{\pi\in S_n}\prod_{i=1}^nb^{\black_i(\pi_i)}w^{\white_i(\pi_i)}=\textnormal{perm}(M).\]  Now, \[M_{ij}=(bw)^{\lfloor j/2\rfloor}\times\begin{cases}
1, & \textnormal{if $j$ is even};\\
b, & \textnormal{if $j$ is odd and $i$ is odd};\\
w, & \textnormal{if $j$ is odd and $i$ is even}.
\end{cases}\] Let $v=(b,w,b,\ldots)^T$. Thus, the $j$th column of $M$ is given by
 \[
(bw)^{\lfloor j/2\rfloor }\times\begin{cases}
\mathbf{1}_n, & \textnormal{if \ensuremath{j} is even};\\
v, & \textnormal{if \ensuremath{j} is odd}.
\end{cases}
\] The product of the column coefficients is given by
\begin{align}
\prod_{j=1}^n(bw)^{\lfloor j/2\rfloor}&=
\left(\prod_{m=1}^{\lfloor n/2\rfloor} (bw)^m\right)  \left(\prod_{m=0}^{\lceil n/2\rceil-1} (bw)^m\right)\nonumber\\
&=(bw)^{\lfloor n/2\rfloor(\lfloor n/2\rfloor+1)/2+ \lceil n/2\rceil(\lceil n/2\rceil-1)/2}\nonumber\\
&=(bw)^{\lfloor n^2/4\rfloor}\label{e0}.
\end{align} Let $A$ be the square matrix of size $n$ whose $j$th column is given by 
\[\begin{cases}
\mathbf{1}_n, & \textnormal{if \ensuremath{j} is even};\\
v, & \textnormal{if \ensuremath{j} is odd}.
\end{cases}\] Applying Lemma \ref{l1} on $A$ with $m= \lfloor n/2\rfloor$ yields
\begin{equation}\label{e2}
\textnormal{perm}(A)
=\lfloor n/2\rfloor!\lceil n/2\rceil!\sum_{\substack{R\subseteq[n]\\ |R|=\lceil n/2\rceil}}\ \prod_{i\in R} v_i.
\end{equation}
Now, the product $\prod_{i\in R}v_i$ depends only on the number $r$ of even (or, equivalently, odd) numbers in $R$. More precisely, 
\begin{equation}\label{e31}
\prod_{i\in R}v_i=b^{\lceil n/2\rceil-r}w^r.
\end{equation} The set $[n]$ consists of $\lceil n/2\rceil$ odd numbers and of $\lfloor n/2\rfloor$ even numbers. Thus, $0\leq r\leq \lfloor n/2 \rfloor$. The number of subsets of $[n]$ of size $\lceil n/2 \rceil$ consisting of $r$ even numbers (and hence of $\lceil n/2\rceil-r$ odd numbers) is given by
\begin{equation}\label{e41}
\binom{\lfloor n/2\rfloor}{r}\binom{\lceil n/2\rceil}{\lceil n/2\rceil-r}=\binom{\lfloor n/2\rfloor}{r}\binom{\lceil n/2\rceil}{r}.
\end{equation} It follows from \eqref{e2}, \eqref{e31}, and \eqref{e41} that
\begin{align*}
\textnormal{perm}(A)&=\lfloor n/2\rfloor!\lceil n/2\rceil!\sum_{r=0}^{\lfloor n/2\rfloor}\binom{\lfloor n/2\rfloor}{r}\binom{\lceil n/2\rceil}{r}b^{\lceil n/2\rceil-r}w^{r}\\&=\lfloor n/2\rfloor!\lceil n/2\rceil!b^{\lceil n/2\rceil}\sum_{r=0}^{\lfloor n/2\rfloor}\binom{\lfloor n/2\rfloor+\alpha}{r}\binom{\lfloor n/2\rfloor}{r}\left(\frac{w}{b}\right)^{r}\\&=\lfloor n/2\rfloor!\lceil n/2\rceil!b^{\lceil n/2\rceil}\left(\frac{w}{b}-1\right)^{\lfloor n/2\rfloor}P_{\lfloor n/2\rfloor}^{(\alpha,0)}\left(\frac{\frac{w}{b}+1}{\frac{w}{b}-1}\right)\\&=\lfloor n/2\rfloor!\lceil n/2\rceil!b^{\alpha}(w-b)^{\lfloor n/2\rfloor}P_{\lfloor n/2\rfloor}^{(\alpha,0)}\left(\frac{w+b}{w-b}\right).
\end{align*}
Using \eqref{e0}, we see that $\textnormal{perm}(M)=(bw)^{\lfloor n^2/4\rfloor}\textnormal{perm}(A)$ and the assertion follows.
\end{proof}

In the following result we use standard notation for shifted factorials and hypergeometric series (e.g., \cite[(1.1.2) and (2.1.2)]{And}).

\begin{proposition}\label{c10}
Denote by $\textnormal{bal}_{S_n}(n)$ the number of bw-balanced permutations of $[n]$ and let $\textnormal{BAL}_{S_n}(x)=\sum_{n\geq0}\frac{\textnormal{bal}_{S_n}(n)}{n!}x^n$ be the corresponding exponential generating function. Then 
\[\textnormal{bal}_{S_n}(n) = 
\begin{cases}  
\lfloor n/2\rfloor!\lceil n/2\rceil!
\binom{\lceil n/2\rceil}{\lceil n/2\rceil/2}\binom{\lfloor n/2\rfloor}{\lceil n/2\rceil/2},&\textnormal{if $n\equiv0,3\pmod4$};\\
0,&\textnormal{if $n\equiv1,2\pmod4$}.
\end{cases}\]
Furthermore, 
\[
\textnormal{BAL}_{S_n}(x)=\frac{(1+x)G(x)-1}{x},
\] where \[G(x)=\rFq{3}{2}{\frac12, \frac12, \frac12}{\frac14,\frac34}{x^{4}}.
\]
\end{proposition}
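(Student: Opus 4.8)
The plan is to specialize Theorem~\ref{t93} to the bw-balanced case by extracting the coefficient of $(bw)^{\lfloor n^2/4\rfloor}$ times the diagonal term in $b$ and $w$, but it is cleaner to work directly from the intermediate formula in the proof of Theorem~\ref{t93}, namely
\[
f_n(b,w)=(bw)^{\lfloor n^2/4\rfloor}\lfloor n/2\rfloor!\lceil n/2\rceil!\sum_{r=0}^{\lfloor n/2\rfloor}\binom{\lfloor n/2\rfloor}{r}\binom{\lceil n/2\rceil}{r}b^{\lceil n/2\rceil-r}w^{r}.
\]
First I would identify when the monomial $(bw)^{\lfloor n^2/4\rfloor}b^{\lceil n/2\rceil-r}w^{r}$ is bw-balanced, i.e.\ has equal $b$- and $w$-exponents: this forces $\lceil n/2\rceil-r=r$, that is $r=\lceil n/2\rceil/2$. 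So there is a bw-balanced contribution precisely when $\lceil n/2\rceil$ is even, which happens iff $n\equiv 0,3\pmod 4$, and in that case the count is the single term $\lfloor n/2\rfloor!\lceil n/2\rceil!\binom{\lceil n/2\rceil}{\lceil n/2\rceil/2}\binom{\lfloor n/2\rfloor}{\lceil n/2\rceil/2}$, with $\binom{\lfloor n/2\rfloor}{\lceil n/2\rceil/2}$ being understood as $0$ when $\lceil n/2\rceil/2>\lfloor n/2\rfloor$ (this cannot actually happen for $n\geq 0$, but the convention is harmless). For $n\equiv 1,2\pmod 4$ the exponent $\lceil n/2\rceil$ is odd, no value of $r$ balances the monomial, and $\textnormal{bal}_{S_n}(n)=0$. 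That establishes the closed form.

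For the exponential generating function I would split the sum over $n$ according to the residue of $n$ modulo $4$, since only $n\equiv 0,3$ contribute. Write $\textnormal{BAL}_{S_n}(x)=\sum_{n\geq 0}a_n x^n/n!$ with $a_n=\textnormal{bal}_{S_n}(n)$. For $n=4j$ one has $\lfloor n/2\rfloor=\lceil n/2\rceil=2j$, so $a_{4j}/(4j)!=(2j)!^2\binom{2j}{j}^2/(4j)!$; for $n=4j+3$ one has $\lfloor n/2\rfloor=2j+1$, $\lceil n/2\rceil=2j+2$, so $a_{4j+3}/(4j+3)!=(2j+1)!(2j+2)!\binom{2j+1}{j+1}\binom{2j+2}{j+1}/(4j+3)!$. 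Then I would convert each of these two coefficient sequences into Pochhammer symbols. Using $(2j)!=4^j j!\,(1/2)_j$ and $(4j)!=4^{2j}(2j)!\,(1/2)_{2j}=\cdots$, and the quartic splitting $(a)_{4j}=4^{4j}(a/4)_j(a/4+1/4)_j(a/4+2/4)_j(a/4+3/4)_j$, both ratios collapse to products of three Pochhammer symbols over three others in the variable $j$, i.e.\ to the general term of a ${}_3F_2$ evaluated at $x^4$. The expected outcome is that $\sum_j a_{4j}x^{4j}/(4j)!=G(x)$ and $\sum_j a_{4j+3}x^{4j+3}/(4j+3)!=x^3\cdot(\text{something})$, and that after assembling $\textnormal{BAL}_{S_n}(x)=G(x)+[\text{the }n\equiv 3\text{ part}]$ one recognizes the right-hand side as $((1+x)G(x)-1)/x$; equivalently, I would verify the claimed identity by checking $x\,\textnormal{BAL}_{S_n}(x)+1=(1+x)G(x)$ coefficient by coefficient, which separates into the $n\equiv 0$ and $n\equiv 3$ congruence classes and reduces to two routine Pochhammer identities.

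The main obstacle is the bookkeeping in the second paragraph: correctly reducing the two factorial ratios $\binom{2j}{j}^2(2j)!^2/(4j)!$ and $\binom{2j+1}{j+1}\binom{2j+2}{j+1}(2j+1)!(2j+2)!/(4j+3)!$ to the canonical ${}_3F_2$ form with parameters $(\tfrac12,\tfrac12,\tfrac12;\tfrac14,\tfrac34)$, keeping all shifts straight, and in particular confirming that the $n\equiv 3$ contribution is exactly $x\cdot G(x)-$(the $n\equiv 0$ contribution beyond the constant term) so that the two classes fuse into the single compact expression $((1+x)G(x)-1)/x$. A convenient sanity check along the way is to compare against small cases: $a_0=1$, $a_3=1!\cdot 2!\cdot\binom{2}{1}\binom{1}{1}=4$, $a_4=2!\cdot 2!\cdot\binom{2}{1}^2=16$, $a_7=3!\cdot 4!\cdot\binom{3}{2}\binom{4}{2}=6\cdot 24\cdot 3\cdot 6$, and verify these match the series coefficients of $((1+x)G(x)-1)/x$.
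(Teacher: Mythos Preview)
Your approach is essentially the same as the paper's: extract the balanced term from the explicit sum in Theorem~\ref{t93} to get the closed form, then split the exponential generating function by residue of $n$ modulo $4$ and reduce the factorial ratios to Pochhammer form. The one simplification you miss, which dissolves the ``main obstacle'' you flag, is that the paper observes directly that $\textnormal{bal}_{S_n}(4m-1)/(4m-1)! = \textnormal{bal}_{S_n}(4m)/(4m)!$, so the $n\equiv 3$ contribution is simply $(G(x)-1)/x$ without any separate Pochhammer computation.
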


\begin{proof}
Substituting $b=t$ and $w=t^{-1}$ in \eqref{r4}, we have
\[\textnormal{bal}_{S_n}(n) = [t^0]f_{n}(t,t^{-1})=[t^0]\left(\lfloor n/2\rfloor!\lceil n/2\rceil!\sum_{r=0}^{\lfloor n/2\rfloor}\binom{\lceil n/2\rceil}{r}\binom{\lfloor n/2\rfloor}{r}t^{\lceil n/2\rceil-2r}\right).
\] Clearly, if $\lceil n/2\rceil$ is odd, then $\lceil n/2\rceil-2r\neq 0$ for every $r$ and hence $\textnormal{bal}_{S_n}(n)=0$. If $\lceil n/2\rceil$ is even, then $\lceil n/2\rceil-2r= 0$ if and only if $r=\lceil n/2\rceil/2$ and therefore \[\textnormal{bal}_{S_n}(n)=\lfloor n/2\rfloor!\lceil n/2\rceil!
\binom{\lceil n/2\rceil}{\lceil n/2\rceil/2}\binom{\lfloor n/2\rfloor}{\lceil n/2\rceil/2}.\] Regarding the egf, notice that $\textnormal{bal}_{S_n}(n) \neq 0$ if and only if $n\equiv0,3\pmod4$. Thus, 
\begin{equation}\label{eo2}
\textnormal{BAL}_{S_n}(x) = \sum_{m\geq 0}\frac{\textnormal{bal}_{S_n}(4m)}{(4m)!}x^{4m} + \sum_{m\geq 1}\frac{\textnormal{bal}_{S_n}(4m-1)}{(4m-1)!}x^{4m-1}.
\end{equation}
Now, using the duplication identity
\((a)_{2m}=2^{2m}\left(\frac {a}{2}\right)_m\left(\frac{a+1}{2}\right)_m\) (e.g., \cite[p.\ 22]{And}), we have
\[
\frac{\textnormal{bal}_{S_n}(4m)}{(4m)!}
=\frac{(2m)!^4}{m!^4(4m)!}
=\frac{(\frac{1}{2})_m^3}{m!(\frac{1}{4})_m(\frac{3}{4})_m}.
\]
Hence,
\begin{equation}\label{eo3}
\sum_{m\geq 0}\frac{\textnormal{bal}_{S_n}(4m)}{(4m)!}x^{4m}=G(x).
\end{equation} Similarly,
\begin{equation}\label{eo4}
\frac{\textnormal{bal}_{S_n}(4m-1)}{(4m-1)!}
=\frac{(2m)!^2(2m-1)!^2}{(m!)^3(m-1)!(4m-1)!}
=\frac{(\frac{1}{2})_m^3}{m!(\frac{1}{4})_m(\frac{3}{4})_m}=\frac{\textnormal{bal}_{S_n}(4m)}{(4m)!}.
\end{equation} It follows that
\[
\sum_{m\ge1}\frac{\textnormal{bal}_{S_n}(4m-1)}{(4m-1)!}x^{4m-1}
=\frac{G(x)-1}{x}.
\]
Finally, by \eqref{eo2}, \eqref{eo3}, and \eqref{eo4}, \[\textnormal{BAL}_{S_n}(x)= G(x)+\frac{G(x)-1}{x} =\frac{(1+x)G(x)-1}{x},\] as asserted.
\end{proof}

\begin{definition}
The number of odd displacements of a permutation $\pi\in S_n$, denoted by $\oD(\pi)$, is defined to be
\[
\oD(\pi)=|\{i\in[n]: i-\pi_i\ \textnormal{is odd}\}|.
\]
\end{definition}

\begin{theorem}\label{t3}
Let $\pi\in S_n$. Then $\pi$ is bw-balanced if and only if $\oD(\pi)=\lceil n/2\rceil$.
\end{theorem}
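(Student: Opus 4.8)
The plan is to translate both conditions into statements about a single $2\times 2$ contingency table recording, for each position $i\in[n]$, the parity of $i$ against the parity of $\pi_i$, and then to observe that the permutation constraints force two of the four cell counts to coincide. Concretely, write $a$ for the number of positions $i$ with $i$ odd and $\pi_i$ odd, $b$ for the number with $i$ even and $\pi_i$ odd, $c$ for the number with $i$ odd and $\pi_i$ even, and $d$ for the number with $i$ even and $\pi_i$ even.

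First I would recall the sign computation already used in the proof of Proposition~\ref{t401}: for a letter $h$ at index $i$ one has $\black_i(h)-\white_i(h)$ equal to $+1$ if $h$ and $i$ are both odd, to $-1$ if $h$ is odd and $i$ is even, and to $0$ if $h$ is even. Summing over $i\in[n]$ gives $\black(\pi)-\white(\pi)=a-b$, so $\pi$ is bw-balanced if and only if $a=b$.

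Next I would bring in the permutation structure. Counting positions by parity gives $a+c=\lceil n/2\rceil$ and $b+d=\lfloor n/2\rfloor$; since $\pi$ is a bijection of $[n]$, counting \emph{values} by parity gives $a+b=\lceil n/2\rceil$ and $c+d=\lfloor n/2\rfloor$. Comparing $a+c=\lceil n/2\rceil=a+b$ yields $b=c$, and therefore
\[
\oD(\pi)=|\{i\in[n]:\ i\ \text{and}\ \pi_i\ \text{have opposite parity}\}|=c+b=2b.
\]

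Finally I would combine the two observations: since $a+b=\lceil n/2\rceil$, the balance condition $a=b$ is equivalent to $2b=\lceil n/2\rceil$, i.e.\ to $\oD(\pi)=\lceil n/2\rceil$, which is the claim. (As a byproduct, both conditions can hold only when $\lceil n/2\rceil$ is even, i.e.\ $n\equiv 0,3\pmod 4$, in agreement with Proposition~\ref{c10}; when $\lceil n/2\rceil$ is odd both sides of the equivalence fail, the right side because $\oD(\pi)=2b$ is always even.) I do not anticipate a genuine obstacle here; the only step needing care is the bookkeeping that produces $b=c$ from the permutation property, and once that symmetry is established the equivalence is immediate.
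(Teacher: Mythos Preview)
Your proof is correct and follows essentially the same approach as the paper: both reduce bw-balancedness to the equality $a=b$ (the paper's $o=e$) via the same per-column sign computation, and then link this to $\oD(\pi)=\lceil n/2\rceil$ by elementary parity counting. The only minor difference is that the paper writes $\oD(\pi)=e+(\lceil n/2\rceil-o)=\lceil n/2\rceil-(o-e)$ directly, whereas you first establish $b=c$ from the bijection on values and deduce $\oD(\pi)=2b$; your route yields the bonus observation that $\oD(\pi)$ is always even.
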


\begin{proof}
Set
\begin{align*}
o&=|\{i\in[n] \;:\; \pi_i \textnormal{ is odd and } i \textnormal{ is odd}\}|,\\
e&=|\{i\in[n]\;:\; \pi_i \textnormal{ is odd and } i \textnormal{ is even}\}|.
\end{align*}
Let $i\in[n]$. Then
\[
\black_i(\pi_i)-\white_i(\pi_i)=
\begin{cases}
+1,& \textnormal{if $\pi_i$ is odd and $i$ is odd};\\
-1,& \textnormal{if $\pi_i$ is odd and $i$ is even};\\
0,& \textnormal{if $\pi_i$ is even}.
\end{cases}
\] Thus, the total difference between the numbers of black and white cells in $\pi$ is given by \[\sum_{i=1}^n (\black_i(\pi_i)-\white_i(\pi_i))=o-e.\] Hence, $\pi$ is bw-balanced if and only if $o=e$. On the other hand, 
\begin{align*}
\oD(\pi)&=
|\{i\in[n]\;:\; i \text{ is even and } \pi_i \text{ is odd}\}| + |\{i\in[n]\;:\; i \text{ is odd and } \pi_i \text{ is even}\}|\\
&= e +(\lceil n/2\rceil - o)\\
&=\lceil n/2\rceil - (o-e).
\end{align*} Hence, $\oD(\pi)=\lceil n/2\rceil$ if and only if $o=e$.
\end{proof}

\begin{remark}
Denote by $T(n,m)$ the number of permutations of $[n]$ with exactly $2m-2$ odd displacements.
By \cite[\seqnum{A226288}]{Sl},
\[
T(n,m)=\lfloor n/2\rfloor!\lceil n/2\rceil!
\binom{\lfloor n/2\rfloor}{m-1}\binom{\lceil n/2\rceil}{m-1}.
\] Thus, the number of permutations of $[n]$ with exactly $\lceil n/2\rceil$ odd displacements is given by
\[
T(n,\lceil n/2\rceil/2+1)=\lfloor n/2\rfloor!\lceil n/2\rceil!
\binom{\lfloor n/2\rfloor}{\lceil n/2\rceil/2}\binom{\lceil n/2\rceil}{\lceil n/2\rceil/2}, 
\] exactly our formula for $\textnormal{bal}_{S_n}(n)$ in Proposition \ref{c10}.
\end{remark}

Standard application of Stirling’s formula yields the following asymptotic proportion of bw-balanced permutations. 

\begin{corollary}
For $n\equiv0,3\pmod4$, we have
\[
\frac{\textnormal{bal}_{S_n}(n)}{n!} \sim  \sqrt{\frac{8}{\pi n}}.
\]
\end{corollary}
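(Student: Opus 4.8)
The plan is to start from the closed-form expression for $\textnormal{bal}_{S_n}(n)$ established in Proposition~\ref{c10}. Since the claim concerns $n \equiv 0,3 \pmod 4$, I would treat the two residue classes together by exploiting the identity $\textnormal{bal}_{S_n}(4m-1)/(4m-1)! = \textnormal{bal}_{S_n}(4m)/(4m)!$ proved in \eqref{eo4}; this reduces everything to estimating the single quantity $\textnormal{bal}_{S_n}(4m)/(4m)! = (2m)!^4/(m!^4(4m)!)$ and then expressing $n$ in terms of $m$ in each case.

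\textbf{Key steps.} First I would write, for $n = 4m$,
\[
\frac{\textnormal{bal}_{S_n}(n)}{n!} = \frac{(2m)!^4}{(m!)^4 (4m)!}
= \frac{1}{\binom{4m}{2m}}\left(\frac{\binom{2m}{m}}{2^{2m}}\cdot 2^{2m}\right)^{\!2}\!\Big/ 2^{4m}
\]
— more cleanly, $(2m)!^2/(m!)^2 = \binom{2m}{m}^{-1}\cdot(2m)!^2/(m!)^2$ is circular, so instead I would just group as $\dfrac{\binom{2m}{m}^2}{\binom{4m}{2m}}$, since $\dfrac{(2m)!^4}{(m!)^4(4m)!} = \dfrac{\big((2m)!/(m!)^2\big)^2 (2m)!^2}{(4m)!} = \dfrac{\binom{2m}{m}^2 \,(2m)!^2}{(4m)!} = \dfrac{\binom{2m}{m}^2}{\binom{4m}{2m}}$. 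Then apply the standard Stirling estimate $\binom{2N}{N} \sim 4^N/\sqrt{\pi N}$ with $N = m$ and $N = 2m$: the numerator is $\sim 16^m/(\pi m)$ and the denominator is $\sim 16^m/\sqrt{2\pi m}$, so the ratio is $\sim \sqrt{2\pi m}/(\pi m) = \sqrt{2/(\pi m)}$. Finally substitute $m = n/4$ to get $\sqrt{8/(\pi n)}$. For $n = 4m-1$, I would invoke \eqref{eo4} to write $\textnormal{bal}_{S_n}(n)/n! = \textnormal{bal}_{S_n}(4m)/(4m)! \sim \sqrt{2/(\pi m)}$, and since here $m = (n+1)/4 \sim n/4$, the same asymptotic $\sqrt{8/(\pi n)}$ results.

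\textbf{Main obstacle.} There is essentially no obstacle — this is a routine two-line Stirling computation. The only point requiring mild care is the bookkeeping in the $n \equiv 3 \pmod 4$ case: one must verify that replacing $m = (n+1)/4$ by $n/4$ inside the $\sqrt{\;\cdot\;}$ only perturbs the result by a factor $1 + O(1/n) \to 1$, so the stated asymptotic equivalence (which is insensitive to such lower-order corrections) holds uniformly over both residue classes. I would phrase the proof so that both cases are handled in one stroke via the factorization $\textnormal{bal}_{S_n}(n)/n! = \binom{2m}{m}^2/\binom{4m}{2m}$ with $m = \lceil n/4\rceil$, followed by $m \sim n/4$.
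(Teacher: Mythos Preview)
Your proposal is correct and is precisely the ``standard application of Stirling's formula'' that the paper invokes without details: you rewrite $\textnormal{bal}_{S_n}(n)/n!$ as $\binom{2m}{m}^2/\binom{4m}{2m}$ using Proposition~\ref{c10} and \eqref{eo4}, apply $\binom{2N}{N}\sim 4^N/\sqrt{\pi N}$, and substitute $m\sim n/4$. The only cosmetic blemish is the aborted first attempt at a factorization (``is circular, so instead\ldots''); in a final write-up you should go directly to the identity $(2m)!^4/\bigl((m!)^4(4m)!\bigr)=\binom{2m}{m}^2/\binom{4m}{2m}$.
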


\end{document}